\documentclass[numbook,envcountsame,smallcondensed]{amsart}
\usepackage[foot]{amsaddr}
\usepackage{amssymb,amsmath,amsfonts,mathrsfs,cite,mathptmx,url}

\DeclareMathOperator{\alf}{alph}
\DeclareMathOperator{\simple}{sim}
\DeclareMathOperator{\mul}{mul}

\newtheorem{theorem}{Theorem}
\newtheorem{lemma}{Lemma}
\newtheorem{corollary}{Corollary}

\DeclareMathAlphabet{\mathcal}{OMS}{cmsy}{m}{n}

\makeatletter

\renewcommand*\subjclass[2][2010]{\def\@subjclass{#2}\@ifundefined{subjclassname@#1}{\ClassWarning{\@classname}{Unknown edition (#1) of Mathematics Subject Classification; using '2010'.}}{\@xp\let\@xp\subjclassname\csname subjclassname@#1\endcsname}}

\renewcommand{\subjclassname}{\textup{2020} Mathematics Subject Classification}

\makeatother

\begin{document}

\title{Small monoids generating varieties with uncountably many subvarieties}

\thanks{Supported by the Ministry of Science and Higher Education of the Russian Federation (project FEUZ-2023-0022).}

\author{Sergey V. Gusev}

\address{Ural Federal University, Institute of Natural Sciences and Mathematics, Lenina 51, Ekaterinburg 620000, Russia}

\email{sergey.gusb@gmail.com}

\begin{abstract}
An algebra that generates a variety with uncountably many subvarieties is said to be of \textit{type $2^{\aleph_0}$}.
We show that the Rees quotient monoid $M(aabb)$ of order ten is of type $2^{\aleph_0}$, thereby affirmatively answering a recent question of Glasson.
As a corollary, we exhibit a new example of type $2^{\aleph_0}$ monoid of order six, which turns out to be minimal and the first of its kind that is finitely based.
\end{abstract}

\keywords{Monoid, variety, uncountably many subvarieties}

\subjclass{20M07}

\maketitle

Recall that a \textit{variety} is a class of algebras of a fixed type that is closed under the formation of homomorphic images, subalgebras, and arbitrary direct products.
All varieties considered in the present article are varieties of monoids.
Following Jackson and Lee~\cite{Jackson-Lee-18}, a monoid is of \textit{type} $2^{\aleph_0}$ if the variety it generates contains uncountably many subvarieties.
In 2006, Jackson and McKenzie~\cite{Jackson-McKenzie-06} exhibited the first finite examples of type $2^{\aleph_0}$ monoids.
But since their examples are of order at least~20, it is of fundamental importance to find smaller examples.

For any set $\mathscr W$ of words over a countably infinite alphabet $\mathscr X$, let $M(\mathscr W)$ denote the Rees quotient of the free monoid $\mathscr X^\ast$ modulo the ideal of all words that are not subwords of any word in $\mathscr W$.
One of the central examples in Jackson and Lee~\cite{Jackson-Lee-18} is the monoid $M(abab)$ of order nine.
Not only is $M(abab)$ of type $2^{\aleph_0}$, it is the smallest example among all Rees quotients of $\mathscr X^\ast$.
Recently, Glasson~\cite{Glasson-24} proved that the monoid $M(abba)$ of order ten is also of type $2^{\aleph_0}$ and asked if the same holds true for the similar monoid $M(aabb)$, also of order ten.
One of the main goals of the present article is to answer this question in the affirmative.

\begin{theorem}
\label{T: M(aabb)}
The monoid $M(aabb)$ is of type $2^{\aleph_0}$.
\end{theorem}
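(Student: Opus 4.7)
The plan is to produce an uncountable family $\{\mathbf{V}_S : S \subseteq \mathbb{N}\}$ of pairwise distinct subvarieties of the variety $\mathbf{M}$ generated by $M(aabb)$, following the blueprint used by Jackson and McKenzie and refined by Jackson and Lee for $M(abab)$ and by Glasson for $M(abba)$. The scheme is standard in outline: introduce a sequence of identities $u_n \approx v_n$ holding in $\mathbf{M}$ that are \emph{independent} in the sense that removing any one of them genuinely weakens the resulting subvariety; then the assignment $S \mapsto \mathbf{V}_S := \mathbf{M} \cap \mathrm{Mod}\{u_n \approx v_n : n \in S\}$ furnishes continuum-many subvarieties.

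Before writing down the $u_n \approx v_n$ I would first pin down the identity theory of $M(aabb)$, in particular a usable syntactic criterion for when $u \approx v$ holds in it. Because every subword of $aabb$ is an isoterm for $M(aabb)$, such a criterion must assert that any satisfied identity preserves the pattern of repeated letters together with the block information carried by the $aa$ prefix and the $bb$ suffix, while allowing only controlled rearrangements of the once-occurring letters. This description is the first technical ingredient and will dictate both what can and what cannot be equated. Using it, I would design each pair $(u_n,v_n)$ so that the identity is forced by the laws of $M(aabb)$ but differs at level $n$ in a way detectable by a companion finite monoid $N_n$ lying in $\mathbf{M}$.

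Pairwise distinctness of the $\mathbf{V}_S$ then reduces to verifying that each $N_n$ satisfies $u_m \approx v_m$ for every $m \neq n$ and fails $u_n \approx v_n$; this is the separating step and the main obstacle. The pattern $aabb$ lacks both the alternation that drives the $M(abab)$ analysis and the palindromy exploited by Glasson for $M(abba)$, so the catalogue of local rewrites available inside $M(aabb)$ is genuinely different, and the families of test words and separating monoids used in prior work do not transfer verbatim. The bulk of the effort will therefore lie in the simultaneous design of the words $u_n,v_n$ and the monoids $N_n$, and in a tailored syntactic analysis of the identities of $M(aabb)$ sharp enough to verify both directions of the separation.
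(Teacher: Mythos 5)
What you have written is a plan, not a proof. The two load-bearing ingredients --- the explicit identities $u_n\approx v_n$, the explicit companion monoids $N_n$, and the verifications that each $N_n$ lies in the variety $\mathbf M$ generated by $M(aabb)$, satisfies $u_m\approx v_m$ for all $m\ne n$, and fails $u_n\approx v_n$ --- are precisely the parts you defer (``the bulk of the effort will therefore lie in\dots''), and they constitute essentially all of the mathematical content of the theorem. Nothing in your text constrains what these objects could be, so there is no argument to check.

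Moreover, the framework as literally stated is internally inconsistent. You ask for identities $u_n\approx v_n$ ``holding in $\mathbf M$'' and ``forced by the laws of $M(aabb)$,'' and simultaneously for monoids $N_n$ lying in $\mathbf M$ that fail $u_n\approx v_n$. If $u_n\approx v_n$ holds in $\mathbf M$, then every member of $\mathbf M$ satisfies it, and $\mathbf M\cap\mathrm{Mod}\{u_n\approx v_n : n\in S\}=\mathbf M$ for every $S$, so the map $S\mapsto\mathbf V_S$ is constant. The identities must \emph{fail} in $\mathbf M$; what is needed is that for each $n$ some member of $\mathbf M$ fails $u_n\approx v_n$ while satisfying all the others. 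For comparison, the paper realizes exactly this corrected scheme with concrete data: the identities are $\mathbf w_n\approx x^2(\mathbf w_n)_x$ for an explicit family of square-free words $\mathbf w_n$ (combining the Jackson and Lee--Zhang patterns); the companion monoids are the Rees quotients $M(\mathscr W_N)$ with $\mathscr W_N=\{\mathbf w_n\mid n\in N\}$; membership in $\mathbf M$ is checked against O.~Sapir's finite identity basis for $M(aabb)$, so no full syntactic description of the equational theory of $M(aabb)$ is required (contrary to what you anticipate as a first step); and the separation --- $M(\mathscr W_N)$ satisfies $\mathbf w_n\approx x^2(\mathbf w_n)_x$ if and only if $n\notin N$ --- is proved via a ``depth'' invariant of letters together with a counting argument on subwords of the square-free words $\mathbf w_k$. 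Until you supply comparable explicit words, monoids, and verifications, the theorem is not proved.
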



Notice that, up to isomorphism, the monoids $M(aabb)$, $M(abab)$, and $M(abba)$ are the only type $2^{\aleph_0}$ monoids of the form $M(\mathbf w)$ of order ten or less.
Indeed, if there are at least two different letters, each of which occurs in $\mathbf w$ more than once, and $\mathbf w$ coincides (up to renaming of letters) with no of the words $aabb$, $abab$ or $abba$, then it is easy to see that $M(\mathbf w)$ is of order more than ten.
If at most one letter occurs in $\mathbf w$ more than once, then $M(\mathbf w)$ generates a variety with finitely many subvarieties \cite[Theorem~1.64]{Lee-23}.

Jackson and Lee~\cite{Jackson-Lee-18} also exhibited two type $2^{\aleph_0}$ monoids of order six:
\begin{align*}
\mathcal A_2^1: & =\langle a, b \mid aa = 0, \, aba = a, \, bab = bb = b\rangle\cup\{1\} \\
\text{and } \,\ \mathcal B_2^1: & =\langle a, b \mid aa = bb = 0, \, aba = a, \, bab = b\rangle\cup\{1\};
\end{align*}
see also Jackson and Zhang~\cite{Jackson-Zhang-21}.
These monoids are well-known to be inherently non-finitely based~\cite{Sapir-87}.
Since every monoid of order five or less generates a variety with at most countably many subvarieties \cite{Gusev-Li-Zhang-24,Lee-Zhang-14}, the monoids $\mathcal A_2^1$ and $\mathcal B_2^1$ are minimal examples of type $2^{\aleph_0}$ monoids.
Therefore, it is of natural interest to question the existence of other minimal examples of type $2^{\aleph_0}$ monoids; if such an example exists, then it is finitely based since up to isomorphism, $\mathcal A_2^1$ and $\mathcal B_2^1$ are the only monoids of order six or less that are non-finitely based~\cite{Lee-Li-11}.
We employ Theorem~\ref{T: M(aabb)} to answer this question in the affirmative.

\begin{corollary}
\label{C: A}
The monoid
\[ \mathcal M:=\langle a, e \mid ee = e, \, aaa = ae = 0, \, eaa = aa \rangle\cup\{1\} \]
of order six is of type $2^{\aleph_0}$.
\end{corollary}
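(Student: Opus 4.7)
The plan is to reduce the corollary to Theorem~\ref{T: M(aabb)} by showing that $M(aabb)\in\mathrm{var}(\mathcal M)$; this inclusion, together with Theorem~\ref{T: M(aabb)}, immediately implies that $\mathrm{var}(\mathcal M)$ has uncountably many subvarieties.

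The natural strategy is to exhibit $M(aabb)$ as a divisor of a finite direct power of $\mathcal M$. A quick analysis of $\mathcal M$ shows that its six elements in canonical form are $1,0,e,a,ea,aa$, and the only ``forbidden'' two-letter factor that collapses a product to $0$ is $ae$ (the other annihilator being $aaa$). Meanwhile, a word $w\in\{a,b\}^\ast$ is a non-factor of $aabb$ precisely when it contains at least one of $ba$, $aaa$, $bbb$ as a factor. The analogy $ae\leftrightarrow ba$ suggests encoding the two letters asymmetrically in a two-coordinate product.

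Concretely, I would define $\varphi\colon\{a,b\}^\ast\to\mathcal M\times\mathcal M$ by $\varphi(a):=(e,a)$ and $\varphi(b):=(a,1)$, let $N$ be the submonoid of $\mathcal M^2$ generated by $\varphi(a)$ and $\varphi(b)$, and let $J:=\{(x,y)\in N : x=0\text{ or }y=0\}$, which is an ideal of $N$ (being the intersection with the ideal $I$ of $\mathcal M^2$ consisting of pairs with a zero coordinate). The remaining work splits into two easy checks. First, that $\varphi$ sends the nine factors of $aabb$, namely $\epsilon,a,aa,b,bb,ab,aab,abb,aabb$, to pairwise distinct elements of $N\setminus J$: a direct tabulation using $eaa=aa$ to normalize gives, for instance, $\varphi(aabb)=(aa,aa)$, $\varphi(ab)=(ea,a)$, $\varphi(aab)=(ea,aa)$, $\varphi(abb)=(aa,a)$. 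Second, that every non-factor $w$ of $aabb$ satisfies $\varphi(w)\in J$: since $w$ must contain one of $ba$, $aaa$, $bbb$ as a factor, and $\varphi(ba)=(0,a)$, $\varphi(aaa)=(e,0)$, $\varphi(bbb)=(0,1)$ all lie in $I$, the ideal property of $I$ forces $\varphi(w)\in J$. These two checks combine to give $N/J\cong M(aabb)$, so $M(aabb)$ is a quotient of the submonoid $N$ of $\mathcal M^2$ and therefore lies in $\mathrm{var}(\mathcal M)$.

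The main obstacle is really the initial guess of $\varphi$; after that, the verification is routine. The guiding principle is to align the unique forbidden two-letter factor $ae$ of $\mathcal M$ with the forbidden factor $ba$ of $M(aabb)$, and to use the second coordinate to count $a$'s so that the bound $a^3=0$ in $\mathcal M$ enforces the absence of $aaa$ on the $a$-side while the first coordinate enforces the absence of $bbb$ on the $b$-side.
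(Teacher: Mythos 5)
Your proposal is correct, but it takes a genuinely different route from the paper. The paper quotes the Lee--Li finite identity basis for $\mathcal M$ (six identities) and simply verifies that $M(aabb)$ satisfies each of them, which places $M(aabb)$ in the variety generated by $\mathcal M$; the heavy lifting is outsourced to the cited basis theorem. You instead realize $M(aabb)$ explicitly as a divisor of $\mathcal M^2$: your substitution $\varphi(a)=(e,a)$, $\varphi(b)=(a,1)$ does send the nine factors $\epsilon,a,aa,b,bb,ab,aab,abb,aabb$ to the nine distinct elements $(1,1),(e,a),(e,aa),(a,1),(aa,1),(ea,a),(ea,aa),(aa,a),(aa,aa)$, none with a zero coordinate, while $\varphi(ba)=(0,a)$, $\varphi(aaa)=(e,0)$, $\varphi(bbb)=(0,1)$ all land in the ideal $I$ of pairs with a zero coordinate, so every non-factor of $aabb$ (each of which contains $ba$, $aaa$, or $bbb$) maps into $J=N\cap I$; hence $N/J\cong M(aabb)$ and $M(aabb)$ lies in the variety generated by $\mathcal M$. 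Your argument is entirely self-contained and elementary, needing only the multiplication table of $\mathcal M$, and it proves the slightly stronger fact that $M(aabb)$ divides the single finite monoid $\mathcal M^2$ rather than merely belonging to the generated variety; the paper's argument is shorter on the page but rests on a nontrivial external result. Either containment suffices to deduce the corollary from Theorem~\ref{T: M(aabb)}.
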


\begin{proof} 
Lee and Li \cite[Proposition~8.1]{Lee-Li-11} have shown that the identities
\[
x^3\approx x^4, \, yzx^3\approx xyxzx, \, x^3y^3\approx y^3x^3, \, ytx^3y \approx ytyx^3, \, xyzxty\approx yxzxty,\, xzytxy\approx xzytyx
\]
constitute an identity basis for the monoid $\mathcal M$.
It is easy to check that the monoid $M(aabb)$ satisfies these identities and so belongs to the variety generated by $\mathcal M$.
The result then follows from Theorem~\ref{T: M(aabb)}.
\end{proof}

To prove Theorem~\ref{T: M(aabb)}, we need some definitions, notation and three auxiliary lemmas.
Recall that $\mathscr X^\ast$ is the free monoid over a countably infinite alphabet $\mathscr X$.
Elements of $\mathscr X$ are called \textit{letters} and elements of $\mathscr X^\ast$ are called \textit{words}.
The \textit{alphabet} of a word $\mathbf w$, i.e., the set of all letters occurring in $\mathbf w$, is denoted by $\alf(\mathbf w)$.
A letter $x\in\alf(\mathbf w)$ is \textit{simple} [respectively, \textit{multiple}] in a word $\mathbf w$ if $x$ occurs exactly once [respectively, at least twice] in $\mathbf w$; the set of all simple [respectively, multiple] letters of $\mathbf w$ is denoted by $\simple(\mathbf w)$ [respectively, $\mul(\mathbf w)$].
A non-empty word $\mathbf w$ is \textit{linear} if $\alf(\mathbf w)=\simple(\mathbf w)$.
For any $x\in\alf(\mathbf w)$, let $\mathbf w_x$ denote the word obtained from $\mathbf w$ by removing all occurrences of $x$ from $\mathbf w$.
The expression $_{i\mathbf w}x$ means the $i$th occurrence of a letter $x$ in a word $\mathbf w$.

We need the concept of the \textit{depth} of a letter $x$ in a word $\mathbf w$, introduced in~\cite[Chapter~3]{Gusev-Vernikov-18} to describe non-group varieties of monoids whose lattice of subvarieties forms a chain.
We define it by induction.
The letters of depth $0$ in $\mathbf w$ are precisely simple letters in $\mathbf w$.
Assume that the set of all letters of depth $k-1$ is defined.
We say that a letter $x$ in $\mathbf w$ is of depth $k$ if $x$ is not of depth less than $k$ and there is the first occurrence of some letter of depth $k-1$ between ${_{1\mathbf w}}x$ and ${_{2\mathbf w}}x$ in $\mathbf w$.
If, for any $n\ge0$, there are no first occurrences of letters of depth $n$ between ${_{1\mathbf w}}x$ and ${_{2\mathbf w}}x$ in $\mathbf w$, then we say that the depth of $x$ in $\mathbf w$ is $\infty$.
We will denote the depth of a letter $x$ in a word $\mathbf w$ by $D(\mathbf w,x)$.

\begin{lemma}[{\!\cite[Lemma~3.15]{Gusev-Vernikov-18}}]
\label{L: no div}
Let $\mathbf u=\mathbf a\varphi(\mathbf w)\mathbf b$, where $\mathbf a,\mathbf b\in\mathscr X^\ast$ and $\varphi\colon \mathscr X\to\mathscr X^\ast$ is a substitution.
If $x\in\alf(\mathbf w)$ and $D(\mathbf w, x)>0$, then $\varphi({_{1\mathbf w}}x)$ considered as a subword of $\mathbf u$ does not contain any first occurrence of a letter of depth less than $D(\mathbf w, x)$ in $\mathbf u$.\qed
\end{lemma}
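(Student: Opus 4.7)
The plan is to prove the lemma by induction on $k=D(\mathbf w,x)\ge 1$. The base case $k=1$ is immediate: since $x$ has at least two occurrences in $\mathbf w$, the word $\varphi(x)$ appears at least twice in $\mathbf u$, so every letter $y$ whose first occurrence in $\mathbf u$ lies inside $\varphi({_{1\mathbf w}}x)$ is non-simple in $\mathbf u$, giving $D(\mathbf u,y)\ge 1 = k$.

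For the inductive step with $k\ge 2$, I would first record an auxiliary observation forced by the minimality in the definition of depth: every letter $s$ whose first occurrence in $\mathbf w$ lies strictly between ${_{1\mathbf w}}x$ and ${_{2\mathbf w}}x$ must satisfy $D(\mathbf w,s)\ge k-1$, since otherwise the definition would yield $D(\mathbf w,x)\le D(\mathbf w,s)+1<k$. I would then suppose, for contradiction, that the lemma fails and pick among all failing letters $y$ one with $j:=D(\mathbf u,y)$ minimal. The base-case argument gives $j\ge 1$, so there is a letter $t$ with $D(\mathbf u,t)=j-1$ whose first occurrence in $\mathbf u$ lies strictly between ${_{1\mathbf u}}y$ and ${_{2\mathbf u}}y$.

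Writing $\mathbf w=\mathbf p x\mathbf q x\mathbf r$ with $x\notin\alf(\mathbf p\mathbf q)$, one has $\mathbf u=\mathbf a\,\varphi(\mathbf p)\,\varphi(x)\,\varphi(\mathbf q)\,\varphi(x)\,\varphi(\mathbf r)\,\mathbf b$, and ${_{1\mathbf u}}t$ must lie in one of the two $\varphi(x)$-blocks or in $\varphi(\mathbf q)$. If ${_{1\mathbf u}}t$ lies in the first $\varphi(x)$-block, then $(x,t)$ is a counterexample with smaller $j$-value, contradicting minimality. If ${_{1\mathbf u}}t$ lies in the second $\varphi(x)$-block, then $t\in\alf(\varphi(x))$ would force $t$ to occur at the corresponding relative position in the first $\varphi(x)$-block, which is before ${_{1\mathbf u}}y$, contradicting that ${_{1\mathbf u}}t$ is the \emph{first} occurrence of $t$ in $\mathbf u$.

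The hard case, and the main obstacle, is when ${_{1\mathbf u}}t$ lies in $\varphi(\mathbf q)$. Here I would identify the letter $s^{\ast}\in\alf(\mathbf q)$ whose image contains ${_{1\mathbf u}}t$; since ${_{1\mathbf u}}t$ is $t$'s first appearance in $\mathbf u$, this occurrence of $s^{\ast}$ must itself be ${_{1\mathbf w}}s^{\ast}$ (any earlier occurrence of $s^{\ast}$ in $\mathbf w$ would force an earlier occurrence of $t$ in $\mathbf u$), and therefore ${_{1\mathbf w}}s^{\ast}$ lies strictly between ${_{1\mathbf w}}x$ and ${_{2\mathbf w}}x$. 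The auxiliary observation gives $D(\mathbf w,s^{\ast})\ge k-1\ge 1$. If $D(\mathbf w,s^{\ast})<k$, the inductive hypothesis applied to $s^{\ast}$ and $t$ yields $D(\mathbf u,t)\ge D(\mathbf w,s^{\ast})\ge k-1$, contradicting $D(\mathbf u,t)=j-1\le k-2$; if instead $D(\mathbf w,s^{\ast})\ge k$, then $(s^{\ast},t)$ is itself a counterexample to the lemma with strictly smaller $j$-value, again contradicting the minimal choice of $y$. This exhausts the cases and closes the induction.
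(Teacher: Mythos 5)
Your argument is correct, but note that the paper itself offers no proof of this lemma to compare against: it is imported from \cite[Lemma~3.15]{Gusev-Vernikov-18} and stated with an immediate end-of-proof mark, so your write-up is a self-contained substitute rather than a variant of an argument in the text. All the individual steps check out: the auxiliary observation is exactly the identity $D(\mathbf w,x)=1+\min\{D(\mathbf w,s)\}$ over letters $s$ with ${_{1\mathbf w}}s$ strictly between ${_{1\mathbf w}}x$ and ${_{2\mathbf w}}x$; the localization of ${_{1\mathbf u}}t$ to the two $\varphi(x)$-blocks or $\varphi(\mathbf q)$ is forced because ${_{2\mathbf u}}y$ cannot lie beyond the position in the second block corresponding to ${_{1\mathbf u}}y$; and the occurrence of $s^{\ast}$ containing ${_{1\mathbf u}}t$ must indeed be ${_{1\mathbf w}}s^{\ast}$, which places it between the two occurrences of $x$. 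Two refinements are worth making. First, the outer induction on $k$ is redundant: in your subcase $D(\mathbf w,s^{\ast})=k-1$ you need not invoke the inductive hypothesis, because $D(\mathbf u,t)=j-1<k-1=D(\mathbf w,s^{\ast})$ (as $j\le k-1$), so $(s^{\ast},t)$ is already a violating pair with smaller $j$-value, exactly as in your other subcase. The whole proof then becomes a single minimal-counterexample argument on $D(\mathbf u,\cdot)$: the base observation shows the minimal value $j$ is at least $1$, and each case either gives a direct contradiction or produces a violation with value $j-1$. Second, an induction on the integer $k=D(\mathbf w,x)$ does not literally cover the case $D(\mathbf w,x)=\infty$, which the statement formally includes (the conclusion then being that $\varphi({_{1\mathbf w}}x)$ contains no first occurrence of any letter of finite depth). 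The collapsed version just described handles this automatically, since the auxiliary observation then forces $D(\mathbf w,s^{\ast})=\infty$; in any event the infinite case is not needed for the application in Lemma~\ref{L: continuum}, where $D(\mathbf w_n,x)=n+1$.
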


As usual, let $\mathbb N$ denote the set of all positive integers.
For each $n\in\mathbb N$, let
\[
\mathbf w_n:=\biggl(\prod_{i=1}^n z_it_i\biggr)\,x\,\biggl(\prod_{i=1}^n z_iy_i^{(n)}\biggr)\,x\,\biggl(\prod_{j=1}^n\biggl(\prod_{i=1}^n y_i^{(n-j)}y_i^{(n+1-j)}\biggr)\biggr).
\]
Notice that the words $\mathbf w_n$ is a combination of two word patterns introduced by Jackson~\cite{Jackson-05} and Lee and Zhang~\cite{Lee-Zhang-14}, respectively.
These two word patterns and their variants have been employed in many further articles; see~\cite{Gusev-Vernikov-18,Gusev-Li-Zhang-24,Jackson-Lee-18}, for instance.

\begin{lemma}
\label{L: D(w_n,..)}
For each $n\in\mathbb N$, $1\le i\le n$ and $0\le k\le n$, we have $D(\mathbf w_n,x)=n+1$, $D(\mathbf w_n, y_i^{(k)})=k$, $D(\mathbf w_n,t_i)=0$ and $D(\mathbf w_n,z_i)=1$.
\end{lemma}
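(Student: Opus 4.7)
The plan is to determine $D(\mathbf w_n, \cdot)$ level by level, starting from depth $0$ and proceeding upward. By inspection of $\mathbf w_n$, the simple letters are exactly $t_1, \dots, t_n$ (each appearing once, in the initial block $\prod_{i=1}^n z_i t_i$) and $y_1^{(0)}, \dots, y_n^{(0)}$ (each appearing once, in the $j = n$ factor of the last block); these are the depth-$0$ letters. For $z_i$, the two occurrences straddle the first appearance of $t_i$ (depth $0$) while no first occurrence of any $y_j^{(0)}$ lies in between, whence $D(\mathbf w_n, z_i) = 1$.

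For $y_i^{(k)}$ with $1 \le k \le n$, I would proceed by induction on $k$. The key combinatorial fact, obtained by unwinding the telescoping double product in the last block, is that $y_i^{(k)}$ occurs exactly twice---for $1 \le k \le n - 1$, in the factors $j = n - k$ and $j = n - k + 1$ of the last block, and for $k = n$, in the middle block and in the $j = 1$ factor of the last block---and that the only first occurrences strictly between these two copies are those of $y_j^{(k-1)}$ for $j \le i$ and of $y_j^{(k)}$ for $j > i$. Assuming inductively that all letters of depth less than $k$ are precisely those listed in the lemma, no $y_j^{(k)}$ is among them, so each $y_j^{(k)}$ has depth at least $k$; whereas $y_1^{(k-1)}$ has depth exactly $k - 1$. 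Hence no first occurrence of a letter of depth less than $k - 1$ lies in the segment, while a first occurrence of a depth-$(k-1)$ letter does; this forces $D(\mathbf w_n, y_i^{(k)}) = k$.

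Finally, the two occurrences of $x$ enclose exactly the middle block $\prod_{i=1}^n z_i y_i^{(n)}$, whose only first occurrences are those of $y_1^{(n)}, \dots, y_n^{(n)}$ (of depth $n$ by the previous step), because each $z_i$ first occurs earlier, in the initial block. Consequently $D(\mathbf w_n, x) = n + 1$.

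The main obstacle is the combinatorial bookkeeping in the inductive step: one must verify that between the two copies of $y_i^{(k)}$ no first occurrence of any letter of depth less than $k - 1$ appears. This rests on identifying where each $y_j^{(m)}$ first occurs in $\mathbf w_n$, and a schematic diagram of the last block makes the verification transparent.
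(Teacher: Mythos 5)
Your proposal is correct and follows essentially the same route as the paper: identify the simple letters $t_i, y_i^{(0)}$ as the depth-$0$ letters, observe that the two occurrences of $z_i$ straddle $t_i$, induct on $k$ for the letters $y_i^{(k)}$, and finish with $x$ using the fact that only the first occurrences of the $y_i^{(n)}$ lie between its two occurrences. Your version actually supplies more detail than the paper's, which merely asserts that $y_i^{(k)}$ is not of depth less than $k$, whereas you justify this by explicitly listing the first occurrences between the two copies of $y_i^{(k)}$.
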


\begin{proof}
Evidently, $\simple(\mathbf w_n)=\{t_1,\dots,t_n,y_1^{(0)},\dots,y_n^{(0)}\}$.
Hence $D(\mathbf w_n,t_i)=D(\mathbf w_n,y_i^{(0)})=0$ for any $i=1,\dots,n$.
Since $z_i\in\mul(\mathbf w_n)$ and the simple letter $t_i$ lies between ${_{1\mathbf w_n}}z_i$ and ${_{2\mathbf w_n}}z_i$ in $\mathbf w_n$, we have $D(\mathbf w_n,z_i)=1$.
Further, assume that $D(\mathbf w_n,y_i^{(j)})=j$ for any $0\le j<k$.
Since the letter $y_i^{(k)}$ is not of depth less than $k$ and the letter ${_{1\mathbf w_n}}y_i^{(k-1)}$ of depth $k-1$ is between ${_{1\mathbf w_n}}y_i^{(k)}$ and ${_{2\mathbf w_n}}y_i^{(k)}$ in $\mathbf w_n$, we have $D(\mathbf w_n,y_i^{(k)})=k$.
Finally, the letter $x$ is not of depth less than $n$ and there is ${_{1\mathbf w_n}}y_1^{(n)}$ between ${_{1\mathbf w_n}}x$ and ${_{2\mathbf w_n}}x$ in $\mathbf w_n$.
Hence $D(\mathbf w_n,x)=n+1$.
\end{proof}

For any $N\subseteq\mathbb N$, put $\mathscr W_N:=\{\mathbf w_n\mid n\in N\}$.

\begin{lemma}
\label{L: continuum}
Let $N$ be a subset of $\mathbb N$.
If $n\notin N$, then $M(\mathscr W_N)$ satisfies $\mathbf w_n\approx x^2(\mathbf w_n)_x$.
\end{lemma}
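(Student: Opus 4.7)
To prove the lemma I will verify that for every substitution $\varphi\colon\mathscr X\to\mathscr X^\ast$ the two words $\varphi(\mathbf w_n)$ and $\varphi(x^2(\mathbf w_n)_x)$ represent the same element of $M(\mathscr W_N)$. If $\varphi(x)$ is the empty word then both reduce literally to $\varphi((\mathbf w_n)_x)$ and the identity holds trivially, so the plan is to assume $\varphi(x)\ne\varnothing$ and show that neither $\varphi(\mathbf w_n)$ nor $\varphi(x^2(\mathbf w_n)_x)$ is a subword of any $\mathbf w_m$ with $m\in N$; this will make both equal to $0$ in $M(\mathscr W_N)$.

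The central structural observation I plan to extract from the definition of $\mathbf w_m$ is that each multiple letter of $\mathbf w_m$ occurs exactly twice at distance precisely $2m+1$, with its first occurrence at an odd position and its second at an even one. Consequently no two adjacent positions of $\mathbf w_m$ are simultaneously first occurrences of multiple letters. Writing $\varphi(x)=a_1\cdots a_L$ with $L\ge1$, if $\varphi(x^2(\mathbf w_n)_x)$ were a subword of some $\mathbf w_m$, then matching the letter $a_i$ in the two back-to-back copies of $\varphi(x)$ forces each $a_i$ to be a multiple letter of $\mathbf w_m$ with separation $L=2m+1\ge3$, so the first copy would occupy $L\ge 2$ consecutive first-occurrence positions, contradicting the observation. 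If instead $\varphi(\mathbf w_n)$ were a subword of $\mathbf w_m$ with $m\in N$, the same argument applied to the now-separated copies still forces the first copy to occupy $L$ consecutive first-occurrence positions, so $L\le 1$, and hence $L=1$.

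In this remaining case $\varphi(x)$ is a single letter $a$, and since $a$ occurs at most twice in $\mathbf w_m$ the two $\varphi(x)$-positions must be exactly the two occurrences of $a$. In particular, the first occurrence of $a$ in $\mathbf w_m$ lies inside $\varphi({_{1\mathbf w_n}}x)$. Lemma~\ref{L: no div} applied to $x\in\alf(\mathbf w_n)$, whose depth in $\mathbf w_n$ equals $n+1$ by Lemma~\ref{L: D(w_n,..)}, then yields $D(\mathbf w_m,a)\ge n+1$; as the maximal depth in $\mathbf w_m$ is $m+1$, this forces $m>n$ (using $m\ne n$) and pins $a$ down to be either the letter $x$ of $\mathbf w_m$ or some $y_i^{(k)}$ with $n<k\le m$.

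To eliminate these remaining possibilities I will use the common template that the substituted word $\varphi(z_j)$ (respectively $\varphi(y_j^{(n)})$) has to sit inside two prescribed regions of $\mathbf w_m$ dictated by the two occurrences of $z_j$ (respectively $y_j^{(n)}$) in $\mathbf w_n$, hence uses only letters common to the alphabets of both regions. If $a$ is the letter $x$ of $\mathbf w_m$, the segment of $\mathbf w_m$ between the two $x$'s equals $\prod_{i=1}^m z_iy_i^{(m)}$, which forces each $\varphi(z_j)$ and each $\varphi(y_j^{(n)})$ to have length at most one, giving the length identity $2m=\sum_j\bigl(|\varphi(z_j)|+|\varphi(y_j^{(n)})|\bigr)\le 2n$ and hence the contradiction $m\le n$. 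If $a=y_{i_0}^{(m)}$, the corresponding between-segment contains the letter $x$ of $\mathbf w_m$, yet $x$ cannot lie in any $\varphi(z_j)$ or $\varphi(y_j^{(n)})$ because each has another copy inside an $x$-free region of $\mathbf w_m$. Finally if $a=y_{i_0}^{(k)}$ with $n<k<m$, the between-segment contains $y_{i_0}^{(k+1)}$, both of whose occurrences in $\mathbf w_m$ lie at or before that segment, contradicting that the second copy of the $\varphi(y_j^{(n)})$ carrying $y_{i_0}^{(k+1)}$ must appear after the segment inside $\varphi\bigl(\prod_{j=1}^n\prod_{i=1}^n y_i^{(n-j)}y_i^{(n+1-j)}\bigr)$. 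I expect this last sub-case to be the main obstacle, as it rests on a careful block-by-block identification of the alphabets of the relevant stretches inside the double product of $\mathbf w_m$.
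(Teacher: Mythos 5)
Your overall route --- forcing $\varphi(x)$ to be a single letter $a$ via the odd/even position observation, then invoking Lemmas~\ref{L: no div} and~\ref{L: D(w_n,..)} to get $m>n$, then deriving a contradiction --- tracks the paper's proof up to the last step, and your parity argument is a nice self-contained substitute for the paper's appeal to square-freeness and to the uniqueness of length-two factors of $\mathbf w_m$. The gap is in your final case analysis. In the case $a=y_{i_0}^{(m)}$ you dismiss the possibility that the occurrence of the letter $x$ of $\mathbf w_m$ lying in the between-segment is covered by some $\varphi(z_j)$ on the grounds that ``each has another copy inside an $x$-free region of $\mathbf w_m$''. That is false for the $z_j$'s: the other copy of $\varphi(z_j)$ sits inside $\varphi\bigl(\prod_{i=1}^n z_it_i\bigr)$, which occupies positions of $\mathbf w_m$ preceding ${_{1\mathbf w_m}}y_{i_0}^{(m)}$, and that region \emph{does} contain the first occurrence of $x$ in $\mathbf w_m$ (at position $2m+1\le 2m+2i_0$). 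So the configuration $\varphi(z_j)=x$, with the two copies of $\varphi(z_j)$ landing on the two occurrences of $x$, is not excluded by what you wrote and needs a separate argument. The same defect recurs in the case $a=y_{i_0}^{(k)}$ with $n<k<m$: you only rule out $y_{i_0}^{(k+1)}$ being carried by some $\varphi(y_j^{(n)})$, but it could be carried by some $\varphi(z_j)$, whose other copy lies in the prefix region, and that region contains the \emph{first} occurrence of $y_{i_0}^{(k+1)}$ (it sits in block $j=m-k-1$ of the double product, or in the middle product if $k+1=m$, in either case before ${_{1\mathbf w_m}}y_{i_0}^{(k)}$), so no third occurrence of $y_{i_0}^{(k+1)}$ is forced.

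The fix is the uniform length count you already use in the case $a=x$, and it is exactly what the paper does. Your own observation that the two occurrences of every multiple letter of $\mathbf w_m$ are precisely $2m+1$ apart shows that the between-segment has length exactly $2m$ no matter which letter $a$ is; and your parity argument, applied verbatim to an arbitrary multiple letter $c$ of $\mathbf w_n$ rather than only to $x$, shows $|\varphi(c)|\le 1$ for every such $c$, so $\varphi\bigl(\prod_{i=1}^n z_iy_i^{(n)}\bigr)$ has length at most $2n<2m$ and cannot fill the between-segment --- a contradiction that disposes of all three cases at once. The paper packages this as: every factor of $\mathbf w_k$ of length less than $2k+2$ containing no simple letters of $\mathbf w_k$ is linear, whereas $\varphi\bigl(xz_1y_1^{(n)}\cdots z_ny_n^{(n)}x\bigr)$ is a non-linear factor of length at most $2n+2$ containing no simple letters. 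With that replacement your argument closes; as written, the last two cases are not proved.
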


\begin{proof}
Lemma~5.1 in~\cite{Jackson-Sapir-00} reduces our considerations to the case when the set $N$ is singleton, i.e., $N=\{k\}$ for some $k\ne n$.
Consider an arbitrary substitution $\varphi\colon \mathscr X\to M(\mathbf w_k)$.
We are going to show that $\varphi(\mathbf w_n)=\varphi(x^2(\mathbf w_n)_x)$.
If $\varphi(x)=1$, then the required claim is evident.
Now let $\varphi(x)\ne 1$.
Then $\varphi(x^2(\mathbf w_n)_x)=0$ because the word $\mathbf w_k$ is square-free.
Arguing by contradiction, suppose that $\varphi(\mathbf w_n)$ is a subword of $\mathbf w_k$, i.e., there are $\mathbf a,\mathbf b\in\mathscr X^\ast$ such that $\mathbf w_k=\mathbf a\varphi(\mathbf w_n)\mathbf b$.

Notice that every subword of length $2$ of $\mathbf w_k$ has the unique occurrence in $\mathbf w_k$.
Since each letter occurs in $\mathbf w_k$ at most twice, it follows that
\begin{itemize}
\item[\textup{($\ast$)}] for any $c\in\mul(\mathbf w_n)$, either $\varphi(c)=1$ or $(\varphi({_{1\mathbf w_n}}c),\varphi({_{2\mathbf w_n}}c))=({_{1\mathbf w_k}}d,{_{2\mathbf w_k}}d)$ for some $d\in\mul(\mathbf w_k)$.
\end{itemize}
In particular, $\varphi({_{1\mathbf w_n}}x)$ is the first occurrence of some letter in $\mathbf w_k$.
Since, by Lemma~\ref{L: D(w_n,..)}, $D(\mathbf w_n,x)=n+1$ and the depth of each letter in $\mathbf w_k$ is at most $k+1$, this fact and Lemma~\ref{L: no div} imply that $n\le k$.

Further, it is evident that the word $\varphi(xz_1y_1^{(n)}\cdots z_ny_n^{(n)}x)$ is not linear and contains no simple letters of $\mathbf w_k$.
By~($\ast$), this word is of length at most $2n+2$.
However, every subword of $\mathbf w_k$ of length less than $2k+2$ not containing simple letters of $\mathbf w_k$ is linear.
Hence $k\le n$, which contradicts the assumption that $n\ne k$.
\end{proof}

\begin{proof}[Proof of Theorem~\ref{T: M(aabb)}]
O.\@ Sapir \cite[Lemma~3.2(i)]{Sapir-19} has shown that the set
\[
\Sigma:=
\{x^3\approx x^4,\,x^3y\approx yx^3,\, yzx^3\approx xyxzx,\, xyzxty\approx yxzxty,\, xzytxy\approx xzytyx\}
\]
forms an identity basis for the monoid $M(aabb)$.
We are going to verify that $M(\mathscr W_{\mathbb N})$ satisfies each identity in $\Sigma$.
Indeed, consider an arbitrary substitution $\varphi\colon \mathscr X\to M(\mathscr W_{\mathbb N})$ and an arbitrary identity $\mathbf u\approx \mathbf v$ in $\Sigma$.
Since $\mathbf u_x=\mathbf v_x$ in any case, we have $\varphi(\mathbf u)=\varphi(\mathbf v)$ whenever $\varphi(x)=1$.
So, we may further assume that $\varphi(x)\ne1$.
Then if $\mathbf u\approx \mathbf v$ is one of the first three identities in $\Sigma$, then $\varphi(\mathbf u)=\varphi(\mathbf v)=0$ because each word $\mathscr W_{\mathbb N}$ contains at most two occurrences of any letter.
Assume now that $\mathbf u\approx \mathbf v$ is one of the identities $xyzxty\approx yxzxty$ or $xzytxy\approx xzytyx$.
If $\varphi(y)=1$, then $\varphi(\mathbf u)=\varphi(\mathbf v)$ because $\mathbf u_y=\mathbf v_y$ in any case.
Now let $\varphi(y)\ne1$.
Then neither $\varphi(xy)$ nor $\varphi(yx)$ is a subword of a word in $\mathscr W_{\mathbb N}$ because every subword of length 2 of $\mathbf w_n$  consists of the first occurrence of a letter and the last occurrence of a letter in $\mathbf w_n$, whence $\varphi(\mathbf u)=\varphi(\mathbf v)=0$ again.
Thus, $M(\mathscr W_{\mathbb N})$ satisfies $\Sigma$.
 
It remains to verify that the monoid $M(\mathscr W_{\mathbb N})$ is of type $2^{\aleph_0}$.
If $N_1$ and $N_2$ are distinct subsets of $\mathbb N$, then we may assume without any loss that there is $n\in N_1\setminus N_2$.
According to Lemma~\ref{L: continuum}, the monoid $M(\mathscr W_{N_2})$ satisfies the identity $\mathbf w_n\approx x^2(\mathbf w_n)_x$.
However this identity does not hold in $M(\mathscr W_{N_1})$.
Therefore, the monoids $M(\mathscr W_{N_1})$ and $M(\mathscr W_{N_2})$ generate distinct varieties.
Since the set of all subsets of a countably infinite set is uncountable and any monoid of the form $M(\mathscr W_N)$ is a quotient of $M(\mathscr W_{\mathbb N})$, the monoid $M(\mathscr W_{\mathbb N})$, and therefore also the monoid $M(aabb)$, are of type $2^{\aleph_0}$.
The proof of Theorem~\ref{T: M(aabb)} is thus complete.
\end{proof}


\end{document}